\newcommand{\C}{\mathbb{C}}
\newcommand{\Q}{\mathbb{Q}}
\newcommand{\Z}{\mathbb{Z}}
\newcommand{\ra}{\rightarrow}
\theoremstyle{plain}
\newtheorem{theo}{Th\'eor\`eme}[section]
\newtheorem{lem}[theo]{Lemme}
\newtheorem{prop}[theo]{Proposition}
\newtheorem{cor}[theo]{Corollaire}
\theoremstyle{definition}
\newtheorem{df}[theo]{D\'efinition}
\title{La conjecture de Tate enti\`ere pour les cubiques de dimension quatre}
\author{Fran\c{c}ois Charles\thanks{
Universit\'e de Rennes1, IRMAR -- UMR 6625 du CNRS, Campus de Beaulieu, 35042 Rennes cedex, France,
francois.charles@univ-rennes1.fr} \and Alena Pirutka \thanks{
Université de Strasbourg, IRMA -- UMR 7501 du CNRS, 7 rue René Descartes, 67084 Strasbourg cedex, France,
pirutka@math.unistra.fr}}
\begin{document}
\maketitle

\begin{abstract}
Dans ce texte, on établit une version entière de la conjecture de Tate pour les cycles de codimension $2$ sur  une  hypersurface cubique lisse $X$ de $\mathbb P^5$ sur une clôture algébrique d'un corps de type fini sur son sous-corps premier et de caractéristique différente de $2$ et $3$. La preuve s'appuie sur la conjecture de Tate à coefficients rationnels prouvée dans ce cas par le premier auteur et sur un argument de géométrie complexe dû à Voisin.

\bigskip

We prove the integral Tate conjecture for cycles of codimension $2$ on smooth cubic fourfolds over an algebraic closure of a field finitely generated over its prime subfield and of characteristic different from $2$ and $3$. The proof relies on the Tate conjecture with rational coefficients proved in that setting by the first author and on an argument of Voisin coming from complex geometry.

\end{abstract}

\section{Introduction}

Soient $k$ un corps de type fini sur son sous-corps premier, $\overline k$ une clôture algébrique de $k$, $\ell$ un nombre premier différent de la caractéristique de $k$ et $X$ une variété projective lisse sur $k$. Notons $G$ le groupe de Galois absolu de $k$. Pour tout entier positif $r$, la conjecture de Tate \cite{Ta65} affirme que l'application classe de cycle à coefficients rationnels et à valeurs dans la cohomologie étale $\ell$-adique
\begin{equation}\label{classecycle}
CH^r(X)\otimes \Q_{\ell} \ra  H^{2r}(X_{\bar k}, \Q_{\ell}(r))^G,
\end{equation}
est surjective.\\

Il est bien connu que la version entière 
qui affirme la surjectivité de l'application
\begin{equation}\label{classecycleZraff}
CH^r(X)\otimes \Z_{\ell} \ra H^{2r}(X_{\overline k}, \Z_{\ell}(r))^G
\end{equation}
n'est en général pas vérifiée. Ce problème est discuté en détail dans \cite{CTSz10}. Si $r$ vaut $1$, un argument classique montre que la surjectivité des applications $(\ref{classecycle})$ et $(\ref{classecycleZraff})$ est équivalente.

Dans le problème entier énoncé au paragraphe précédent on peut distinguer deux questions. La première est l'analogue de la conjecture de Hodge entière, et demande l'existence de cycles sur la clôture algébrique du corps de base $k$. La seconde demande que ces cycles descendent à $k$ lui-même. De façon plus précise, la première de ces questions demande si l'application
\begin{equation}\label{classecycleZ}
CH^r(X_{\bar k})\otimes \Z_{\ell} \ra \bigcup_U H^{2r}(X_{\overline k}, \Z_{\ell}(r))^U
\end{equation}
est surjective. Pour les classes de courbes -- soit $r=\mathrm{dim}(X)-1$ -- Schoen a montré dans \cite{Sch98} que l'application (\ref{classecycleZ}) est surjective si la conjecture de Tate est vraie pour les diviseurs.

Le cas de la codimension au moins $2$ est plus difficile. D'une part, certains contre-exemples à la conjecture de Hodge entière, comme celui d'Atiyah et Hirzebruch \cite{AH62} s'adaptent à la caractéristique positive (voir \cite{CTSz10}) pour fournir un contre-exemple à la conjecture de Tate entière. D'autre part, et c'est un problème qui ne se pose pas directement sous cette forme pour la conjecture de Hodge, il est faux en général que l'application $CH^r(X)\otimes \Z_\ell\ra CH^r(X_{\bar k})^G\otimes \Z_{\ell}$ soit surjective si $r$ est différent de $0, 1$ et de la dimension de $X$, comme montré dans \cite{Pi11}.

\bigskip

La conjecture de Hodge entière pour les variétés dont la dimension de Kodaira est petite a été étudiée par Voisin et Höring-Voisin dans une série d'articles \cite{HV11, Vo06, Vo07, Vo13} qui aboutissent à la preuve de la conjecture de Hodge pour les classes entières dans la cohomologie en degré $4$ pour les variétés de Calabi-Yau et les variétés unireglées de dimension $3$, ainsi que pour certaines variétés rationnellement connexes et certaines fibrations en cubiques de dimension $3$. Les arguments de Voisin sont largement de nature transcendante, et font appel de manière cruciale à la notion de variation de structures de Hodge. La question de la validité de la conjecture de Tate entière est néanmoins abordée dans \cite{Vo12} pour les variétés rationnellement connexes. Parimala et Suresh  \cite{PS10} établissent une version forte de la conjecture de Tate entière  pour les cycles de codimension $2$ pour $X$ une fibrations en coniques au-dessus d'une surface géométriquement réglée : pour une telle fibration 
$X$,  l'application $CH^2(X)\otimes \Z_{\ell} \ra H^{4}(X, \Z_{\ell}(2))$ est surjective. Ceci permet ausssi d'obtenir la même conclusion  pour un solide cubique lisse. Hors de ces travaux et du théorème de Schoen cité ci-dessus, il ne semble pas qu'il existe de résultat positif sur la conjecture de Tate à coefficients entiers sur les corps finis.

\bigskip

Dans \cite{Vo07}, Voisin montre que la conjecture de Hodge entière est vraie pour les cycles de codimension $2$ dans une cubique lisse de $\mathbb P^5_{\C}$. Une des clés de la preuve est la méthode des fonctions normales de Zucker \cite{Zu76} qui généralise à la codimension supérieure la démonstration de Poincaré et Lefschetz de la conjecture de Hodge pour les classes de diviseurs. Le théorème principal de cet article étend ce résultat au cas des corps finis.

\begin{theo}\label{tate-entiere}
Soit $k$ un corps de type fini sur son sous-corps premier et de caractéristique différente de $2$ et $3$. Soit $\overline k$ une clôture algébrique de $k$. Soit $X$ une  hypersurface cubique lisse de $\mathbb P^5_k$. La conjecture de Tate entière est vraie pour les cycles de codimension $2$ sur $X$. Autrement dit, pour tout nombre premier $\ell$ différent de la caractéristique de $k$, l'application classe de cycle
\begin{equation}\label{cycle}
CH^2(X_{\bar k})\otimes\Z_{\ell}\to \bigcup_U H^{4}(X_{\bar k}, \Z_{\ell}(2))^U
\end{equation}
est surjective, où $G$ est le groupe de Galois absolu de $k$  et $U$ parcourt le système des sous-groupes ouverts de $G$.
\end{theo}

Il est à remarquer que le cas où la caractéristique de $k$ est nulle est déjà connu : il résulte de la conjecture de Tate à coefficients rationnels prouvé dans ce cas par André dans \cite[1.6.1]{An96} -- pour les corps de nombres, et le cas général s'en déduit -- et de la conjecture de Hodge entière pour les cubiques de dimension $4$ prouvée par Voisin dans \cite{Vo07}.

La version rationnelle de la conjecture de Tate pour  les cubiques de dimension $4$ sur les corps finis de caractéristique au moins $5$ est démontrée dans \cite[Corollaire 6]{Ch12} -- le cas des corps de type fini sur leur sous-corps premier s'en déduit par des techniques standard -- et dans \cite[Théorème 5.14]{Pera13}. Il s'agit du point de départ de notre résultat.

Sa version rationnelle étant acquise, la preuve du théorème s'inspire de celle de \cite{Vo07, Vo13}. Il s'agit d'abord d'utiliser la méthode de Zucker en associant à une classe Galois-invariante une section de la fibration en jacobiennes intermédiaires associée à un pinceau de Lefschetz de $X$. Ensuite, un résultat de Markushevich et Tikhomirov \cite{MT01} permet de construire une famille de cycles algébriques à partir de toute section comme ci-dessus.

En général, les notions de jacobienne intermédiaire et de fonction normale, qui sont des objets de géométrie complexe, n'ont pas d'analogue en caractéristique positive. Dans notre cas, c'est la description due à Clemens et Griffiths \cite{CG72}, de la jacobienne intermédiaire d'une cubique de dimension $3$ qui permet de donner un sens à la stratégie ci-dessus. \`A notre connaissance, il s'agit de la première utilisation des fonctions normales pour la construction de cycles algébriques sur un corps fini.

\bigskip

\paragraph{Remerciements.} Ce travail a largement bénéficié de deux séjours du premier auteur à l'Université de Strasbourg, que nous remercions pour son hospitalité. Nous remercions les ANR CLASS et Positive pour leur soutien financier. Nous remercions Jean-Louis Colliot-Thélène, Bruno Kahn, Tam\'as Szamuely et Claire Voisin pour leur lecture détaillée et leurs remarques précises qui nous ont permis d'améliorer et corriger la première version de ce texte.

\section{Fonctions normales pour une fibration en cubiques}

Dans cette section, nous considérons une situation un peu plus générale que celle du théorème \ref{tate-entiere}. Soit $k$ un corps de type fini sur son sous-corps premier, ou un corps algébriquement clos. Soit $\pi : Y\ra T$ un morphisme entre les variétés projectives lisses sur $k$. On suppose que la fibre générique de $\pi$ est une hypersurface cubique lisse de $\mathbb P^4$. Autrement dit, $\pi$ est une fibration en solides cubiques au-dessus de $T$. Soit $U$ l'ouvert de $T$ au-dessus duquel $\pi$ est un morphisme lisse.

Pour simplifier les notations dans ce qui suit, et comme cette condition sera satisfaite dans le cas du théorème \ref{tate-entiere}, nous supposerons en outre que le groupe $H^3(Y_{\overline k}, \Z_{\ell})$ est nul, où $\ell$ est un nombre premier différent de la caractéristique de $k$ et où $\overline k$ est une clôture algébrique de $k$. Cette hypothèse garantit que le système local $R^3\pi_*\Z_{\ell}$ sur $U$ n'a pas de partie constante. En général, les énoncés qui suivent valent en quotientant par cette partie constante.

\bigskip

Supposons d'abord que le corps $k$ soit le corps des nombres complexes $\C$. Nous répétons ici les constructions décrites dans \cite{Zu76, Vo13}. Les chapitres 12 et 19 de \cite{Vo02} contiennent également des précisions sur ce qui suit.

Si $Y_t$ est une fibre lisse de $\pi$ au-dessus d'un point complexe $t$ de $T$, on dispose, suivant Griffiths \cite{Gr68}, de la jacobienne intermédiaire
$$J(Y_t)=\frac{H^3(Y_t, \C)}{F^2H^3(X, \C)\oplus H^3(X, \Z)},$$
où $F^{\bullet}$ est la filtration de Hodge. Il s'agit d'un tore complexe. D'après \cite{Gr68} encore, ce qui précède vaut en famille, et l'on obtient une fibration $J\ra U$ en jacobiennes intermédiaires.

Soit maintenant $\alpha$ une classe de Hodge dans $H^4(Y, \Z(2))$ telle que la restriction de $\alpha$ à $H^4(Y_t, \Z(2))$ soit nulle pour une -- ou toute -- fibre lisse $Y_t$ de $\pi$. La classe $\alpha$ induit dans ce cas une fonction normale $\nu_{\alpha}$ associée à $J\ra U$, c'est-à-dire une section de $J\ra U$ vérifiant la propriété d'horizontalité.

Dans le cas où $\alpha$ est la classe de cohomologie d'un cycle $Z$ de codimension $2$ dans $Y$, que l'on supposera pour simplifier plat au-dessus de $T$, la fonction normale $\nu_{\alpha}$ s'obtient comme suit. Si $t$ est un point de $U$, soit $Z_t\in CH^2(Y_t)$ la restriction de $Z$ à la fibre $Y_t$. Le cycle $Z_t$ est par hypothèse homologue à zéro dans $Y_t$. Son image par l'application d'Abel-Jacobi de Griffiths
$$\Phi_t : CH^2(Y_t)_{hom}\ra J(Y_t)$$
est bien définie. C'est la valeur en $t$ de la fonction normale $\nu_{\alpha}$.

Ce qui précède s'applique sans hypothèse sur la fibre générale de $Y\ra T$. Dans cette généralité, il s'agit cependant d'une construction transcendante qui n'a pas de pendant connu sur un corps plus général que celui des complexes. Dans notre cas, la situation est en fait algébrique. En effet, si $t$ est un point de $U$, il résulte de l'annulation du groupe $H^3(Y_t, \mathcal O_{Y_t})$ que la jacobienne intermédiaire $J(Y_t)$ est une variété abélienne par \cite[12.2.2]{Vo02}. Plus généralement, la fibration en jacobiennes intermédiaires $J\ra U$ est un schéma abélien au-dessus de $U$. La suite de cette section est consacrée à la description de la fonction normale associée à un cycle comme ci-dessus au-dessus d'un corps  $k$  de type fini sur son sous-corps premier.

Nous utiliserons dans ce qui suit la cohomologie continue au sens de Jannsen \cite{Ja88}. Si $k$ est fini, il s'agit simplement de la cohomologie étale $\ell$-adique. Soit $\ell$ un nombre premier différent de la caractéristique de $k$. Dans ce contexte, l'analogue de l'application d'Abel-Jacobi de Griffiths est l'application d'Abel-Jacobi $\ell$-adique définie d'abord par Bloch \cite{Bl84}. On renvoie à \cite{Ch10} pour une discussion de la notion de fonction normale dans ce contexte. Soit $\overline k$ une clôture algébrique de $k$, et soit $\ell$ un nombre premier différent de la caractéristique de $k$.

Soit maintenant $\alpha$ un élément du groupe de cohomologie $\ell$-adique $H^4(Y, \Z_{\ell}(2))$ dont la restriction à une -- ou toute -- fibre géométrique lisse de $\pi$ est nulle. Via la suite spectrale de Leray pour le morphisme $\pi$
$$E_2^{pq}=H^p(U, R^q\pi_*\mathbb \Z_{\ell}(2))\Rightarrow  H^{p+q}(Y_U, \mathbb \Z_{\ell}(2)),$$ la classe $\alpha$ induit un élément du groupe $H^1(U, R^3\pi_*\Z_{\ell}(2))$, qui ne dépend que de l'image de $\alpha$ dans $H^4(Y_{\overline k}, \Z_{\ell}(2))$ puisque le groupe $H^3(Y_{\overline k}, \Z_{\ell}(2))$ est nul par hypothèse.

Au sens de \cite{Ch10} par exemple, il s'agit de la classe de cohomologie de la fonction normale qu'il nous reste encore à construire. Pour ce faire, il faut décrire de manière géométrique la jacobienne intermédiaire d'une cubique de dimension $3$. Cette description est essentiellement due à Clemens et Griffiths \cite{CG72}.

\bigskip

Rappelons que d'après \cite[1.12]{AK77}, le schéma paramétrant les droites contenues dans une hypersurface cubique lisse de dimension $3$ est une surface lisse -- c'est la {\it surface de Fano} de la cubique. Soit $\psi : F\ra U$ la surface de Fano relative de $\pi$. Le morphisme $\psi$ est projectif et lisse, de dimension relative $2$. Ses fibres sont les surfaces de Fano des fibres de $\pi$. Soit $V$ la variété d'incidence associée au-dessus de $U$, et $p, q$ les deux morphismes canoniques de $V$ dans $F$ et $Y_U$ respectivement. La dimension relative de $V$ est $3$.  Cette situation correspond au diagramme suivant :

\footnotesize
\begin{equation}\label{inc}\xymatrix{
& V \ar[dl]_{p}\ar[dr]^{q} & & \\
 F\ar[dr]_{\psi}& & Y_U\ar[dl]^{\pi} &  \\
& U & &
}
\end{equation}
\normalsize
D'après \cite[Théorème 11.19]{CG72}, le morphisme
$$p_*q^* : R^3\pi_*\Z_{\ell}(2)\ra R^1\psi_*\Z_{\ell}(1)$$
est un isomorphisme de faisceaux étales sur $U$. Pour le voir, il suffit de montrer que c'est un isomorphisme en chaque fibre géométrique de $\pi$. Comme les hypersurfaces cubiques se relèvent en caractéristique zéro, on se ramène au cas des cubiques lisses de dimension $3$ sur $\C$, où notre énoncé s'obtient à partir de celui prouvé par Clemens et Griffiths par dualité de Poincaré.

\bigskip

Soit maintenant $J\ra U$ le schéma  ${\bf Pic}^{\tau}(F/U)$. Pour tout entier positif $r$, la suite exacte de Kummer
\begin{equation}\label{kumJ}
 0\to R^1\psi_*\Z/\ell^r\Z(1)\to J\stackrel{\ell^r}{\to}J\to 0
 \end{equation}
induit une application
$$H^0(U,J)/\ell^r\to H^1(U, R^1\psi_*\Z/\ell^r\Z(1)).$$
En passant à la limite, on obtient une application
\begin{equation}\label{kum}
H^0(U,J)\otimes \mathbb Z_\ell\to H^1(U,R^1\psi_*\mathbb Z_{\ell}(1)).
\end{equation}
Si $\nu$ est un élément de $H^0(U,J)\otimes \mathbb Z_\ell$, on note $[\nu]\in H^1(U, R^3\pi_*\Z_{\ell}(2))$ son image par le composé du morphisme précédent avec l'isomorphisme
$$(p_*q^*)^{-1} :  H^1(U,R^1\psi_*\mathbb Z_{\ell}(1)) \ra H^1(U, R^3\pi_*\Z_{\ell}(2)).$$

\begin{df}
Soit $\alpha$ un élément du groupe $H^4(Y, \Z_{\ell}(2))$ dont la restriction aux fibres géométriques lisses de $\pi$ est nulle. On dit qu'un élément $\nu$ de $H^0(U,J)\otimes \mathbb Z_\ell$ est une \textit{fonction normale} associée à $\alpha$ si $[\nu]$ est l'image de $\alpha$ dans $H^1(U, R^3\pi_*\Z_{\ell}(2))$.
\end{df}

Contrairement à la situation sur le corps des complexes, on ne sait pas construire de fonction normale associée à une classe de cohomologie. Le résultat suivant montre que c'est possible pour les classes de cycles algébriques.

\begin{prop}\label{nf}
Soit $\alpha$ un élément du groupe $H^4(Y, \Z_{\ell}(2))$ dont la restriction aux fibres géométriques lisses de $\pi$ est nulle. Si $\alpha$ est la classe de cohomologie d'un cycle de codimension $2$ sur $Y$, alors il existe une fonction normale $\nu_{\alpha}$ associée à $\alpha$. De plus, $\nu_{\alpha}$ appartient à l'image de $H^0(U, J)$ dans $H^0(U,J)\otimes \mathbb Z_\ell$.
\end{prop}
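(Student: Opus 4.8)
Le plan est de reproduire de mani\`ere alg\'ebrique la construction complexe rappel\'ee plus haut, en fabriquant \`a partir du cycle une famille de diviseurs sur la surface de Fano relative gr\^ace \`a la correspondance d'incidence (\ref{inc}). Soit donc $Z$ un cycle de codimension $2$ sur $Y$ de classe $\alpha$. Comme $q$ est un morphisme de vari\'et\'es lisses, l'image inverse $q^*Z\in CH^2(V)$ est bien d\'efinie, et je poserais $D\eqdef p_*q^*(Z_{|Y_U})$. Puisque $p$ est de dimension relative $1$, on a $D\in CH^1(F)=\mathrm{Pic}(F)$~: c'est le diviseur relatif qui doit porter la fonction normale. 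Par construction, $\mathrm{cl}(D)=p_*q^*\alpha$ dans $H^2(F_U,\Z_\ell(1))$.

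Je v\'erifierais ensuite que $D$ d\'efinit une section de $J=\mathbf{Pic}^\tau(F/U)$. Pour tout point $t$ de $U$, la compatibilit\'e de la correspondance au changement de base propre et lisse donne $\mathrm{cl}(D)_{|F_t}=(p_t)_*q_t^*(\alpha_{|Y_t})$~; l'hypoth\`ese $\alpha_{|Y_t}=0$ entra\^ine alors $\mathrm{cl}(D_{|F_t})=0$, de sorte que $D_{|F_t}$ est num\'eriquement trivial sur la surface $F_t$, donc appartient \`a $\mathrm{Pic}^\tau(F_t)$. La classe de $D$ dans $\mathbf{Pic}(F/U)(U)$, image de $D\in\mathrm{Pic}(F_U)$ par la suite exacte $\mathrm{Pic}(F_U)\ra\mathbf{Pic}(F/U)(U)\ra\mathrm{Br}(U)$, se factorise par cons\'equent \`a travers $J$ et fournit un \'el\'ement $\nu\in H^0(U,J)$. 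C'est l\`a que l'on obtiendrait d'embl\'ee l'assertion suppl\'ementaire~: la fonction normale cherch\'ee sera l'image de $\nu$ dans $H^0(U,J)\otimes\Z_\ell$.

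Il resterait \`a \'etablir le point central de l'\'enonc\'e, \`a savoir l'\'egalit\'e de $[\nu]$ et de l'image de $\alpha$ dans $H^1(U,R^3\pi_*\Z_\ell(2))$. Comme le morphisme $p_*q^*$ de (\ref{inc}) est un isomorphisme de faisceaux et commute \`a la formation des classes de Leray, il suffit de prouver cette \'egalit\'e apr\`es lui avoir appliqu\'e $p_*q^*$~: il faut donc montrer que l'image de $\nu$ par l'application de Kummer (\ref{kum}) co\"incide avec la classe de Leray de $\mathrm{cl}(D)=p_*q^*\alpha$ dans $H^1(U,R^1\psi_*\Z_\ell(1))$ -- classe bien d\'efinie puisque $\mathrm{cl}(D)$ est de restriction nulle aux fibres. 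Cette identit\'e est la compatibilit\'e entre l'application classe de cycle des diviseurs et la suite spectrale de Leray~: \`a coefficients $\Z/\ell^r$, la classe de cycle d'un diviseur n'est autre que le bord de la suite de Kummer de $\mathbb{G}_m$ sur $F_U$, et les suites spectrales de Leray de $\mathbb{G}_m$ et de $\mu_{\ell^r}$ pour $\psi$ sont reli\'ees par ce bord, ce qui identifie la classe de Leray de $\mathrm{cl}(D)$ \`a l'image de $\nu$ d\'eduite de (\ref{kumJ}), puis \`a son image par (\ref{kum}) apr\`es passage \`a la limite sur $r$.

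La principale difficult\'e sera pr\'ecis\'ement d'\'etablir cette derni\`ere compatibilit\'e de fa\c{c}on fonctorielle au niveau fini $\ell^r$ -- en contr\^olant les termes $\varprojlim^1$ lors du passage \`a la limite projective, ce que permet la finitude des groupes de cohomologie en jeu -- et de justifier soigneusement la commutation de $p_*q^*$ avec les filtrations de Leray des deux fibrations.
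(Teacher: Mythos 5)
Votre construction est essentiellement identique à celle du texte : on pose $Z'=p_*q^*Z\in CH^1(F)$, on constate que sa restriction aux fibres de $\psi$ est de degré zéro (donc numériquement triviale), d'où une section de $\mathbf{Pic}^\tau(F/U)$, puis on ramène tout à la compatibilité entre la classe de cycle d'un diviseur et le bord de Kummer dans la suite spectrale de Leray. La seule différence est que le texte délègue cette dernière compatibilité à \cite[Appendix]{Ra95} (et \cite{Ch10}) après réduction au point générique de $U$, tandis que vous en esquissez la preuve standard via la comparaison des suites de Kummer pour $\mathbb{G}_m$ et $\mu_{\ell^r}$ — ce qui est correct, les difficultés que vous signalez (fonctorialité au niveau fini, $\varprojlim^1$) étant précisément ce que couvre la référence.
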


\begin{proof}
Donnons une construction explicite de $\nu_{\alpha}\in H^0(U, J)$. Soit $Z\in CH^2(Y)$ un cycle algébrique de classe de cohomologie $\alpha$. On note aussi $Z$ la restriction de $Z$ à l'ouvert $Y_U$ par abus de notation.

Le cycle $Z'=p_*q^*Z$ est un élément de $CH^1(F)$, soit une classe d'équivalence rationnelle de diviseurs sur $F$. Comme la restriction de $Z$ aux fibres géométriques lisses de $\pi$ est homologiquement équivalente à zéro, il en va de même de la restriction de $Z'$ aux fibres géométriques de $\psi$. Autrement dit, le diviseur $Z'$ est de degré zéro sur les fibres géométriques de $\psi$, et correspond donc à une section $\nu_{\alpha}$ de $J={\bf Pic}^{\tau}(F/U).$

\bigskip

Montrons que l'image de $\nu_{\alpha}$ dans $H^0(U,J)\otimes \mathbb Z_\ell$ est la fonction normale associée à $\alpha$. Soit $\beta$ la classe de cohomologie du cycle $Z'$ dans $F$. Par la suite spectrale de Leray associée à $\psi$, la classe $\beta$ induit un élément de $H^1(U,R^1\psi_*\mathbb Z_{\ell}(1))$. Par fonctorialité de la correspondance $p_*q^*$, il s'agit de montrer que l'image de $\nu_{\alpha}$ par  la flèche (\ref{kum}) est égale à l'image de $\beta$ dans $H^1(U,R^1\psi_*\mathbb Z_{\ell}(1))$. Pour ce faire,  il suffit de montrer l'égalité des classes de cohomologie ci-dessus au-dessus du point générique de $U$, ce qui est essentiellement d\'emontr\'e dans \cite[Appendix]{Ra95}. Un résultat semblable dans un contexte plus général est aussi discuté dans \cite[Remarque 2 après le théorème 12]{Ch10}.

\end{proof}

Enfin, on obtient ce qui suit.

\begin{prop}\label{torsion}
Soit $\alpha$ un élément du groupe $H^4(Y, \Z_{\ell}(2))$ dont la restriction aux fibres géométriques lisses de $\pi$ est nulle, et soit $N$ un élément de $\Z_{\ell}$. S'il existe une fonction normale associée à $N\alpha$, alors il existe une fonction normale associée à $\alpha$.
\end{prop}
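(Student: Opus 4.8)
The plan is to turn the existence of a normal function into a membership statement in a single $\Z_\ell$-module and to deduce everything from a saturation property. Using the isomorphism $(p_*q^*)^{-1}$, I would work in $L := H^1(U, R^1\psi_*\Z_{\ell}(1))$: write $c \in L$ for the image of $\alpha$ under the Leray edge map $H^4(Y,\Z_{\ell}(2)) \to H^1(U, R^3\pi_*\Z_{\ell}(2))$ transported to $L$, and let $M \subseteq L$ be the image of the Kummer map $(\ref{kum})$, namely $M = \mathrm{im}\bigl(H^0(U,J)\otimes\Z_{\ell} \to L\bigr)$. By the very definition of a normal function, one is associated to $\alpha$ exactly when $c \in M$, and one is associated to $N\alpha$ exactly when $Nc \in M$. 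Since $\Z_{\ell}$ is a discrete valuation ring and $N\neq 0$, write $N = \ell^m u$ with $u \in \Z_{\ell}^{\times}$; replacing the given $\nu'$ by $u^{-1}\nu'$, which is legitimate because $H^0(U,J)\otimes\Z_{\ell}$ is a $\Z_{\ell}$-module, I may assume $Nc = \ell^m c \in M$. The proposition then reduces to the single claim that $M$ is saturated in $L$, i.e. that $L/M$ is torsion-free: granting this, $\ell^m c \in M$ forces $c \in M$, hence a normal function for $\alpha$.

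To prove the saturation, I would pass to the limit over $r$ in the long exact sequence of continuous cohomology attached to the Kummer sequence $(\ref{kumJ})$. The transition maps on $H^0(U,J)/\ell^r$ are surjective, so the relevant $\varprojlim^1$ vanishes, and one obtains a short exact sequence
\begin{equation*}
0 \to \varprojlim_r H^0(U,J)/\ell^r \to L \to T_{\ell}H^1(U,J) \to 0.
\end{equation*}
The quotient $T_{\ell}H^1(U,J)$ is an $\ell$-adic Tate module, hence torsion-free, so $\varprojlim_r H^0(U,J)/\ell^r$ is saturated in $L$. It thus suffices to identify $M$ with $\varprojlim_r H^0(U,J)/\ell^r$; equivalently, to show that the natural comparison map $H^0(U,J)\otimes\Z_{\ell} \to \varprojlim_r H^0(U,J)/\ell^r$ is surjective, since $(\ref{kum})$ factors as this map followed by the inclusion above.

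The last identification is where the arithmetic enters, and it is the step I expect to be the main obstacle. The group $H^0(U,J)$ is the group of sections of the abelian scheme $J = {\bf Pic}^{\tau}(F/U)$, and restriction to the generic point embeds it into $J_{\eta}(k(T))$. When $k$ is finitely generated over its prime field, the Mordell--Weil--Lang--N\'eron theorem shows that $J_{\eta}(k(T))$, and hence $H^0(U,J)$, is finitely generated; for a finitely generated abelian group $A$ the comparison map $A\otimes\Z_{\ell} \to \varprojlim_r A/\ell^r$ is an isomorphism, so $M = \varprojlim_r H^0(U,J)/\ell^r$ as desired. The algebraically closed case I would reduce to this one by spreading out: all the data, the class $c$, and the relation $\ell^m c = [\nu']$ are defined over a finitely generated subfield $k_1 \subset k$, because $\ell$-adic \'etale cohomology and $H^0(U,-)$ commute with the filtered colimit $\overline{k} = \varinjlim k_1$; applying the finitely generated case over $k_1$ gives $c \in M_{k_1}$, which maps into $M$.

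In summary, the only genuinely non-formal input is the finite generation of $H^0(U,J)$ via Lang--N\'eron, which is what pins down the image of the Kummer map as the full kernel of $L \to T_{\ell}H^1(U,J)$; once that is in hand, the torsion-freeness of the Tate module makes $M$ saturated, and the division by $\ell^m$ is immediate. Everything else — absorbing the unit $u$, the $\varprojlim^1$ vanishing, and the spreading-out in the algebraically closed case — is routine.
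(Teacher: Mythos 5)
Your argument is correct and takes essentially the same route as the paper: both reduce the statement to the torsion-freeness of the cokernel of (\ref{kum}), obtained by combining the Kummer long exact sequence, the torsion-freeness of $T_{\ell}H^1(U,J)$, and the surjectivity of $H^0(U,J)\otimes\Z_{\ell}\to\varprojlim_r H^0(U,J)/\ell^r$ coming from Mordell--Weil--Lang--N\'eron. The only (cosmetic) difference is in the last input: the paper deduces that surjectivity from the finiteness of $H^0(U,J)/\ell$ alone, citing \cite[Lemme 2.1]{CTK12}, which avoids both your spreading-out step in the algebraically closed case and any appeal to finite generation of $H^0(U,J)$ itself.
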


\begin{proof}
La suite exacte longue de cohomologie associée à (\ref{kumJ}) donne la suite exacte
$$0\to \varprojlim H^0(U,J)/\ell^r\to H^1(U, R^1\psi_*\Z_{\ell}(1))\to T_{\ell}H^1(U,J)$$
où $T_{\ell}H^1(U,J)$ est le module de Tate de $H^1(U, J)$.
Comme le groupe $H^0(U,J)/\ell$ est fini d'après les théorèmes de Mordell-Weil et de Lang-Néron, l'application
$$H^0(U,J)\otimes \mathbb Z_\ell\to \varprojlim H^0(U,J)/\ell^r$$
est surjective (voir \cite[Lemme 2.1]{CTK12}).
Comme le module de Tate $T_{\ell}H^1(U,J)$  est sans torsion, le conoyau de l'application (\ref{kum}) est sans torsion.  Si l'image de $N\alpha$ dans $H^1(U, R^1\psi_*\Z_{\ell}(1))$ est dans l'image de (\ref{kum}), il en va donc de même de l'image de $\alpha$.
\end{proof}

\section{Preuve du th\'eorème}

Dans cette section on établit le théorème suivant.

\begin{theo}\label{tate-entiere1}
Soit $k$ un corps de type fini sur son sous-corps premier et de caractéristique différente de $2$ et $3$. Soit $\overline k$ une clôture algébrique de $k$. Soit $X$ une hypersurface cubique lisse de $\mathbb P^5_k$.

La conjecture de Tate entière est vraie pour les cycles de codimension $2$ sur $X$. Autrement dit, pour tout nombre premier $\ell$ différent de la caractéristique de $k$, l'application classe de cycle
$$CH^2(X_{\bar k})\otimes\Z_{\ell}\to \bigcup_U H^4(X_{\bar k},\mathbb \Z_{\ell}(2))^U$$
est surjective, où $G$ est le groupe de Galois absolu de $k$, et $U$ parcourt le système des sous-groupes ouvert de $G$.
\end{theo}

Pour prouver le théorème \ref{tate-entiere1}, nous allons appliquer à un pinceau de Lefschetz sur $X_{\bar k}$  la construction de la fonction normale de la section précédente. 

\bigskip

\begin{lem}\label{existence}
Il existe un pinceau de Lefschetz de sections hyperplanes de $X_{\bar k}$.
\end{lem}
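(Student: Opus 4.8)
Le plan est d'appliquer le crit\`ere classique d'existence des pinceaux de Lefschetz sur un corps alg\'ebriquement clos, d\^u \`a Deligne et Katz (SGA 7 II, Expos\'e XVII), et d'en v\'erifier les deux hypoth\`eses g\'eom\'etriques pour la cubique lisse $X_{\bar k}\subset\PP^5_{\bar k}$, l'exclusion des caract\'eristiques $2$ et $3$ intervenant de fa\c con cruciale.

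D'abord je rappellerais le cadre. Notons $\PP^5=\PP(V)$ et $(\PP^5)^{\vee}=\PP(V^{\vee})$ l'espace projectif dual param\'etrant les hyperplans. L'application de Gauss $\gamma : X_{\bar k}\ra (\PP^5)^{\vee}$ associe \`a un point $x$ son espace tangent projectif $\mathbb T_x X_{\bar k}$, qui est l'unique hyperplan $H$ tel que $X_{\bar k}\cap H$ soit singuli\`ere en $x$ ; son image est la vari\'et\'e duale $X^{\vee}$. Une droite $D\subset(\PP^5)^{\vee}$ d\'efinit un pinceau de Lefschetz d\`es que : (i) $D$ rencontre transversalement $X^{\vee}$, dans son lieu lisse, en des points au-dessus desquels la section hyperplane correspondante de $X_{\bar k}$ acquiert un unique point double ordinaire ; et (ii) l'axe du pinceau, qui est un $\PP^3$, rencontre transversalement $X_{\bar k}$. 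D'apr\`es SGA 7 II, l'ensemble de telles droites est un ouvert dense de la grassmannienne des droites de $(\PP^5)^{\vee}$ d\`es lors que $X^{\vee}$ est une hypersurface et que la section hyperplane attach\'ee au point g\'en\'erique de $X^{\vee}$ poss\`ede un unique point double ordinaire.

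Le c\oe ur de la d\'emonstration consiste \`a v\'erifier ces deux conditions pour la cubique. En un point $x\in X_{\bar k}$, on d\'eshomog\'en\'eise l'\'equation cubique $F$ en coordonn\'ees locales de sorte que $f=x_5+q(x_1,\dots,x_5)+(\text{terme cubique})$, l'hyperplan tangent \'etant $\{x_5=0\}$ ; la section tangente $X_{\bar k}\cap\mathbb T_x X_{\bar k}$ a alors pour \'equation $q(x_1,\dots,x_4,0)+(\text{terme cubique})$ au voisinage de $x$, et sa singularit\'e est un point double ordinaire exactement lorsque la forme quadratique $q(x_1,\dots,x_4,0)$ --- la seconde forme fondamentale de $X_{\bar k}$ en $x$ --- est non d\'eg\'en\'er\'ee. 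Cette forme est gouvern\'ee par la matrice hessienne de $F$, dont les entr\'ees sont des formes lin\'eaires et dont le d\'eterminant d\'efinit une hypersurface de degr\'e $6$ de $\PP^5$. L'hypoth\`ese de caract\'eristique diff\'erente de $2$ est ce qui assure qu'une forme quadratique non d\'eg\'en\'er\'ee d\'efinit bien un point double ordinaire ; j'utiliserais les hypoth\`eses de caract\'eristique diff\'erente de $2$ et $3$ pour voir que $\det\mathrm{Hess}(F)$ ne s'annule pas identiquement sur la cubique lisse $X_{\bar k}$, de sorte que la seconde forme fondamentale est non d\'eg\'en\'er\'ee sur un ouvert dense de $X_{\bar k}$. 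De fa\c con \'equivalente, dans cette plage de caract\'eristiques l'application de Gauss $\gamma$ est s\'eparable et g\'en\'eriquement non ramifi\'ee, donc $X^{\vee}=\gamma(X_{\bar k})$ est une hypersurface par r\'eflexivit\'e, et la section tangente g\'en\'erique poss\`ede un unique point double ordinaire.

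Ces points acquis, je conclurais par un argument de g\'en\'ericit\'e. Le lieu singulier de $X^{\vee}$ est de codimension au moins $2$ dans $(\PP^5)^{\vee}$, de sorte qu'une droite g\'en\'erique $D$ l'\'evite et rencontre transversalement le lieu lisse de $X^{\vee}$ ; d'apr\`es le calcul pr\'ec\'edent, chacun de ces points d'intersection correspond \`a une section n'ayant qu'un unique point double ordinaire, et des points d'intersection distincts fournissent des points doubles distincts puisque $\gamma$ est g\'en\'eriquement injective. Enfin, une droite $D$ g\'en\'erique a un axe rencontrant transversalement $X_{\bar k}$ par le th\'eor\`eme de Bertini. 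Une telle droite $D$ est le pinceau de Lefschetz cherch\'e. Je m'attends \`a ce que l'obstacle principal soit la v\'erification en caract\'eristique positive de la condition de point double ordinaire, c'est-\`a-dire l'exclusion d'une application de Gauss ins\'eparable ou d'une seconde forme fondamentale d\'eg\'en\'er\'ee : c'est pr\'ecis\'ement l\`a qu'interviennent les hypoth\`eses sur la caract\'eristique, et c'est le seul endroit o\`u l'argument s'\'ecarte du cas complexe classique.
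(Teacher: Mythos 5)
Votre d\'emonstration est correcte et suit essentiellement la m\^eme voie que celle de l'article : crit\`ere d'existence de SGA 7 II, Expos\'e XVII, via la vari\'et\'e duale, non-ramification g\'en\'erique de l'application de Gauss gr\^ace \`a l'hypoth\`ese sur la caract\'eristique, puis argument de g\'en\'ericit\'e \`a la Bertini pour choisir une droite \'evitant le lieu singulier de $X^{\vee}$. La seule divergence porte sur la justification de la non-ramification g\'en\'erique : l'article la d\'eduit en une ligne du fait que le degr\'e $3\cdot 2^5$ de l'application de Gauss (exemple 3.4 de l'Expos\'e XVII) est premier \`a la caract\'eristique d\`es que celle-ci diff\`ere de $2$ et $3$, tandis que vous proposez de v\'erifier directement que le d\'eterminant hessien ne s'annule pas identiquement sur $X_{\bar k}$, calcul que vous n'effectuez pas et qui est pr\'ecis\'ement le point d\'elicat que l'argument de degr\'e permet d'\'eviter.
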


\begin{proof}
Cet énoncé est une conséquence des théorèmes d'existence de \cite[Exposé XVII]{SGA7}. Soit $X_{\bar k}^{\vee}$ la variété duale de $X_{\bar k}$ dans l'espace projectif $\mathbb P$ des sections hyperplanes de $X_{\bar k}$. L'application de Gauss de $X_{\bar k}$ sur sa variété duale $X_{\bar k}^{\vee}$, qui à un point de $X_{\bar k}$ associe la section hyperplane découpée par l'hyperplan tangent, est finie de degré $3\cdot 2^5$ comme l'exemple 3.4 de \cite[Exposé XVII]{SGA7} le montre, elle est donc génériquement non ramifiée car la caractéristique de $k$ est différente de $2$ et $3$. D'après \cite[Exposé XVII, Proposition 3.5]{SGA7}, le lieu lisse de $X_{\bar k}^{\vee}$ est un ouvert dense $U$ de $X^{\vee}$.  Par le théorème de Bertini, on peut trouver une droite de $V_l$ qui intersecte $X_{\bar k}^{\vee}$ uniquement dans son ouvert de lissité $U$. Cette droite est un pinceau de Lefschetz vérifiant les propriétés du lemme.

\end{proof}

Conservant les notations du lemme, notons $S$ le lieu de base du pinceau de Lefschetz considéré. Soit  $\iota:Y\to X_{\bar k}$ l'\'eclatement de $X_{\bar k}$ le long de $S$. Le pinceau induit un morphisme $\pi : Y\ra \mathbb P^1_{\bar k}$ dont les fibres sont des hypersurfaces cubiques $Y_t$ de dimension $3$. Soit $U\subset \mathbb P^1_{\bar k}$ l'ouvert de lissit\'e de $\pi$ et soit $Y_U$ l'image réciproque de $U$ dans $Y$.

\lem\label{fibres}{Soit $\alpha$ un élément du groupe $H^4(X_{\bar k},\mathbb Z_{\ell}(2))$.  Il existe  une classe alg\'ebrique $\alpha_0$ dans $H^4(Y, \mathbb Z_{\ell}(2))$
 telle que pour tout point géométrique $t$ de $U$, la restriction $\alpha'_t$ de $\alpha'=\iota^*\alpha-\alpha_0$ \`a la fibre $Y_t$ est nulle.}
\proof{ Par construction, le diviseur exceptionnel de $\iota: Y\to X_{\bar k}$ est isomorphe \`a $S\times \mathbb P^1_{\bar k}$. Comme $S$ est une surface cubique sur $\bar k$, on dispose d'une droite $l$ contenue dans $S$, ce qui donne une droite, que l'on note encore $l$, dans chaque section hyperplane $Y_t$.  La restriction $\alpha_t$ de $\alpha$ à $Y_t$ s'écrit $\alpha_t=b[l]$ pour un certain entier $b$ indépendant de $t$. On peut donc prendre $\alpha_0=b[l\times \mathbb P^1]$.    \qed\\}

\bigskip

Reprenons les notations de la section précédente, notant $\psi : F \to U$ la surface de Fano relative de $\pi$, $V$ la vari\'et\'e d'incidence et $J\ra U$ le schéma  $\text{\textbf{Pic}}^{\tau}( F/U)$. On note aussi $J^0\ra U$ le schéma  $\text{\textbf{Pic}}^{0}( F/U)$.

Soit  $\alpha$ un élément de $H^4(X_{\bar k},\mathbb Z_{\ell}(2))^U$, où $U\subset G$ est un sous-groupe ouvert.  Par abus de notation, on note encore $\alpha$ la classe $\iota^*\alpha$ dans $H^4(Y,\mathbb Z_{\ell}(2))$. D'après le  lemme  \ref{fibres}, pour démontrer le théorème \ref{tate-entiere1}, on peut supposer que la restriction de $\alpha $ à toute fibre géométrique lisse de $\pi$ est nulle.
La proposition suivante permet alors d'associer à $\alpha$ une fonction normale $\nu_{\alpha}\in H^0(U,J)\otimes \mathbb Z_{\ell}$.

\prop\label{normale-cubique}{Soit $\alpha$ un élément du groupe $H^4(X_{\bar k}, \Z_{\ell}(2))$ dont la restriction aux fibres géométriques lisses de $\pi$ est nulle. Il existe alors une fonction normale  $\nu_{\alpha}\in H^0(U,J)\otimes \mathbb Z_{\ell}$  associée à $\alpha$. }
\proof{D'après la proposition \ref{torsion}, il suffit de montrer qu'il existe une fonction normale pour un multiple $N\alpha$ de la classe $\alpha$. D'après \cite[Corollary 6]{Ch12} ou \cite[5.14]{Pera13}, la conjecture de Tate \`a coefficients  rationnels vaut pour les cycles de codimension $2$ sur $X_{\bar k}$. Il existe donc un multiple de $\alpha$ qui est combinaison linéaire à coefficients entiers $\ell$-adiques de classes algébriques. L'énoncé résulte  alors des propositions \ref{nf} et \ref{torsion}.

\qed\\}

\bigskip

Pour construire un cycle algébrique sur $Y$ à partir de la fonction normale $\nu_{\alpha}$, nous adaptons un argument de Voisin dans \cite[Theorem 18]{Vo07}. Le point clé est l'existence d'un espace de modules $\mathcal M_t$ birationnel à la jacobienne intermédiaire $J_t$.

Soit $\mathcal M\to \mathbb P^1_{\bar k}$ l'espace de modules des faisceaux semi-stables de rang $2$ sans torsion  dans les fibres de $Y\to \mathbb P^1_{\bar k}$ vérifiant $c_1=0, c_2=2[l]$ dans la cohomologie des fibres géométriques de $Y\to \mathbb P^1_{\bar k}$. D'après \cite{La04, La04A}, $\mathcal M$ est un schéma projectif au-dessus de $\mathbb P^1_{\bar k}$. L'application $p_*q^*(c_2-2l)$ induit l'application d'Abel-Jacobi au-dessus de $U$
$$\Phi: \mathcal M_U\to J.$$

\prop\label{bir}{Soit $s$ un point de $U$.  Il existe une composante irréductible  $\mathcal M'_s$ de  $\mathcal M_s$ qui est birationnelle à  $J^0_s$ par l'application d'Abel-Jacobi $\Phi_s$.}
\proof{Soit $\kappa$ le corps résiduel de $s$. D'après les résultats de Markushevich, Tikhomirov et Druel \cite{MT01,Dr00}, la proposition vaut si $\kappa$ est de caractéristique nulle. En général, la cubique $Y_s$ se relève en caractéristique nulle : il existe $R$ un anneau de valuation discrète de corps résiduel $\kappa$ et de corps des fractions $K$ de caractéristique nulle et un $R$-schéma $\mathcal Y$ dont la fibre spéciale est la cubique $Y_s$ et la fibre générique est une cubique lisse sur $K$. Toujours d'après \cite{La04, La04A}, on dispose d'une famille $\mathcal M_{R}/R$ d'espaces de modules des faisceaux semi-stables sans torsion avec $c_1=0, c_2=2l$ et d'une application d'Abel-Jacobi $\Phi_{R}:\mathcal M_{R}\to \mathcal J$,  où  $\mathcal J/R$ est la famille des jacobiennes intermédiaires de  $\mathcal Y$. Puisque $K$ est de caractéristique nulle, l'application $\Phi_{K}$ est birationnelle d'après \cite{MT01,Dr00}. L'application inverse
$\mathcal J\dashrightarrow \mathcal M_{R}$ est définie en tout point de codimension $1$ de $\mathcal J^0$, puisque  $\mathcal J^0$ est normale et $\mathcal M_R$ est propre. Elle est donc définie au point générique de la fibre spéciale $J^0_s$, d'où le résultat.
\qed \\}

\cor\label{section}{Soit $\nu$ une section de $J^0$ au-dessus de $U$. Alors $\nu$ se relève en une section $\nu'$ de la flèche composée $\mathcal M_U\ra U$.

\proof{Il suffit de construire la section $\nu'$ au-dessus du point générique $\eta$ de $U$. Par la proposition précédente, il existe une composante  $\mathcal M'_{\eta}$ de  $\mathcal M_{\eta}$ telle que  l'application $\Phi_{\eta}: \mathcal M'_{\eta}\to J^{0}_{\eta}$ est un isomorphisme birationnel. L'énoncé est donc une conséquence   du lemme de Nishimura \cite{Ni55}, comme $J{^0}_{\eta}$ est une variété normale  et  $\mathcal M'_{\eta}$ est propre. \qed\\}

\begin{cor}\label{nf-existe}
Soit $\nu$ une section de $J^0$ sur $U$. Il existe une classe algébrique $\alpha$ dans $H^4(Y, \Z_\ell(2))$, de restriction nulle aux fibres géométriques lisses de $\pi$, telle que $\nu$ est une fonction normale associée à $\alpha$.
\end{cor}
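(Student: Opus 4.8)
The plan is to reverse the construction of Proposition \ref{nf}: starting from the section $\nu$, I would produce an explicit codimension $2$ cycle on $Y$ whose associated normal function is exactly $\nu$, and then read off from Proposition \ref{nf} that $\nu$ is the normal function of the corresponding algebraic class.

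First I would lift $\nu$ to the moduli space. By Corollaire \ref{section}, the section $\nu$ of $J^0$ over $U$ lifts to a section $\nu'$ of $\mathcal M_U \to U$; since $U$ is a smooth curve and $\mathcal M_U \to U$ is projective, this $\nu'$ is a genuine morphism defined over all of $U$. Composing with the universal family of semistable sheaves on $\mathcal M_U \times_U Y_U$, the section $\nu'$ pulls back to a rank $2$ sheaf $\mathcal E$ on $Y_U$, flat over $U$, whose restriction to each smooth fibre $Y_t$ is the sheaf $E_t$ parametrised by $\nu'(t)$; in particular $c_1(E_t)=0$ and $c_2(E_t)=2[l]$, where $l$ is the line furnished by Lemme \ref{fibres}. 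Let $\tilde l$ denote the codimension $2$ cycle on $Y$ given by the family of lines $l\subset Y_t$, so that $\tilde l$ restricts to $[l]$ on each fibre. I then set $\alpha=c_2(\mathcal E)-2[\tilde l]$, first as a class on $Y_U$ and then extend it to a class in $H^4(Y,\Z_\ell(2))$ by taking the closure of a representing cycle; this is visibly an algebraic class, and for every geometric point $t$ of $U$ one has $\alpha|_{Y_t}=c_2(E_t)-2[l]=0$, which is the required vanishing on smooth fibres.

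It then remains to identify the normal function of $\alpha$ with $\nu$. By Proposition \ref{nf}, $\nu_\alpha$ is the section of $J=\mathbf{Pic}^{\tau}(F/U)$ obtained from the divisor $p_*q^*\bigl(c_2(\mathcal E)-2[\tilde l]\bigr)$ on $F$. Fibre by fibre this divisor class equals $p_*q^*\bigl(c_2(E_t)-2[l]\bigr)=\Phi(\nu'(t))$, which is exactly $\nu(t)$ because $\nu'$ was constructed as a lift of $\nu$ along the Abel--Jacobi map $\Phi$. Hence $\nu_\alpha=\nu$, and the required fonction normale has been realised by an algebraic class.

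The main obstacle is the very first step: $\mathcal M$ is only a coarse moduli space, so a universal sheaf on $\mathcal M_U\times_U Y_U$ need not exist, the obstruction being a Brauer class pulled back from $\mathcal M_U$. I expect this to be the delicate point. One route is to check that the numerical invariants $(\mathrm{rk}=2,\ c_1=0,\ c_2=2[l])$ make the moduli space fine along the image of $\nu'$; a more robust route is to work with a twisted universal sheaf and verify that the only ambiguity in $\mathcal E$ — tensoring by a line bundle pulled back from the base curve $U$ — leaves $c_2(\mathcal E)$ unchanged modulo classes that are irrelevant for the normal function, where the standing hypotheses $H^3(Y_{\bar k},\Z_\ell)=0$ and the absence of a constant part in $R^3\pi_*\Z_\ell$ are precisely what should kill the spurious terms. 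Failing that, one can construct $\mathcal E$, and hence $\alpha$, over the generic point of $U$ after a finite base change and descend, which is harmless here since only the existence of an algebraic class inducing $\nu$ is needed.
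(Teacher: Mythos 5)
Your first and last steps match the paper: you lift $\nu$ to a section $\nu'$ of $\mathcal M_U\to U$ via le corollaire \ref{section}, and you identify the resulting normal function with $\nu$ via la proposition \ref{nf} and the compatibility of $p_*q^*$ with the Abel--Jacobi map. But the middle of your argument has a genuine gap, and it is exactly the point you yourself flag as ``the main obstacle'': $\mathcal M$ is only a coarse moduli space, so the sheaf $\mathcal E$ on $Y_U$ that you pull back along $\nu'$ does not exist as written. You offer three possible workarounds without carrying any of them out, and none of them, as stated, closes the gap. There is no reason the moduli space should be fine along the image of $\nu'$; and the twisted-sheaf route would require showing that the Brauer obstruction dies or that $c_2$ of a twisted sheaf still produces a well-defined integral cycle class inducing the right normal function --- the hypotheses $H^3(Y_{\bar k},\Z_\ell)=0$ you invoke control the constant part of $R^3\pi_*\Z_\ell$, not a Brauer class on $\mathcal M_U$.

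Your third suggestion (work over the generic point after a finite base change) is the closest to what the paper actually does, but your claim that such a base change ``is harmless since only the existence of an algebraic class is needed'' is precisely where care is required: the target of the theorem is an \emph{integral} ($\Z_\ell$-) statement, so if the base change had degree divisible by $\ell$ you would only recover $\ell\alpha$ up to a denominator, i.e.\ a class in $CH^2(Y)\otimes\Q_\ell$. The paper controls the degree as follows: $\mathcal M$ is a GIT quotient of an open subscheme of a Quot scheme $Q$ by $GL(r)$, and over $Q$ a universal sheaf \emph{does} exist; by the lemma of Drezet--Narasimhan there is a unique closed $GL(r)$-orbit $C$ in $Q_{\bar K}$ above the point $x=\nu'(\eta)$, with $K=\bar k(t)$, so $C$ is defined over the perfect closure $K^p$; since $K^p$ has cohomological dimension $1$, Springer's theorem gives a $K^p$-point of $C$, hence a lift of $\nu'$ to $Q$ after a purely inseparable cover $D\to\mathbb P^1_{\bar k}$ of degree $d$ a power of $p=\mathrm{car}(k)$. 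One then sets $Z=\frac{1}{d}\,pr_{Y,*}\bigl(c_2(\mathcal E)-2[l\times D]\bigr)$ with $\mathcal E$ the universal sheaf on $Y\times_{\mathbb P^1}D$; the division by $d$ is legitimate in $\Z_\ell$ exactly because $d$ is prime to $\ell$. This chain of arguments (Quot scheme, unique closed orbit, Springer, inseparability of the cover) is the real content of the proof and is absent from your proposal.
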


\begin{proof}

D'après le corollaire \ref{section}, on peut trouver une section $\nu': U \to\mathcal M_U$ qui relève $\nu$. Si l'espace de modules $\mathcal M$ était un espace de modules fins, c'est-à-dire s'il existait une famille universelle de faisceaux sur $\mathcal M$ -- ou même sur le lieu correspondant aux points stables -- le pull-back de cette famille à $\nu'$ nous fournirait la famille de courbes cherchée.

Il faut montrer comment contourner ce problème. Nous renvoyons à \cite[Section 4]{La04} et à \cite[Chapitre 4]{HL10} pour une construction détaillée de $\mathcal M$. Le schéma $\mathcal M$ est le quotient, au sens de la théorie géométrique des invariants de \cite{Mu65}, d'un ouvert d'un schéma $Q=Quot(\mathcal O_Y(-N)^r, P)$ au-dessus de $\mathbb P^1_{\bar k}$ qui paramètre les quotients de $ \mathcal O_Y(-N)^r$ de polynôme de Hilbert $P$ par le groupe $GL(r)$ -- $r$, $N$ et $P$ étant choisis de manière convenable.

Soit $\eta$ le point générique de $U$, soit $K$ le corps $\bar k(t)$ et soit $x$ le point de $\mathcal M_{\eta}(K)$ image de $\eta$ par $\nu'$. D'après \cite[Lemme 2.1]{DN89}, qui s'applique dans notre contexte, il existe une unique orbite fermée de $GL(r)$ dans $Q_{\bar K}$ au-dessus de $x$. En particulier, cette orbite est définie sur une clôture parfaite $K^p$ de $K$. Notons $C$ cette orbite. Comme la dimension cohomologique de $K$, et donc de $K^p$, est $1$, l'ensemble des $K^p$-points de $C$ est non-vide d'après le théorème de Springer  \cite[p.134]{Se94}. Ainsi, on peut trouver une courbe propre et lisse $D$ sur $\bar k$ munie d'un morphisme $D\to \mathbb P^1_{\bar k}$ de degré $d$, o\`u $d=1$ si $k$ est de caractéristique nulle  et $d$ est une puissance de la caractéristique de $k$  sinon, et  un morphisme $D\to Q$ qui rel\`eve la section $\nu'$.

Même s'il n'existe pas nécessairement de fibré universel au-dessus de l'espace de modules $\mathcal M$, le schéma $Q$ est lui un schéma Quot, au-dessus duquel l'existence d'un fibré universel est toujours vérifiée par définition -- voir \cite{GrothendieckTDTE} et le plus récent \cite[2.2]{HL10}. Soit $\mathcal E$ le faisceau sur $Y\times_{\mathbb P^ 1} D$ correspondant et soit $Z\in CH^2(Y)\otimes \Z_{\ell}$ défini par $Z=\frac{1}{d}pr_{Y,*}(c_2(\mathcal E)-2[l\times D])$ o\`u $pr_Y$ est le morphisme de projection $Y\times_{\mathbb P^ 1} D\to Y$. D'apr\`es la construction et la proposition \ref{nf}, $\nu$ est une fonction normale associée \`a la classe de $Z$.

 \end{proof}

\begin{proof}[Preuve du théorème \ref{tate-entiere1}]
Soit $\alpha\in H^4(X_{\bar k},\Z_{\ell}(2))$. D'après le lemme \ref{fibres}, quitte à modifier $\alpha$ par une classe algébrique, on peut supposer que la restriction de $\alpha$ aux fibres géométriques lisses de $\pi$ est nulle. D'après la proposition \ref{normale-cubique}, il existe une fonction normale $\nu_{\alpha}\in H^0(U,J)\otimes \mathbb Z_{\ell}$, associée à $\alpha$. Quitte \`a multiplier  $\alpha$ par une puissance de $p=car.k$, on peut supposer que $\nu_{\alpha}\in H^0(U,J^0)\otimes \mathbb Z_{\ell}$. Le corollaire \ref{nf-existe} montre qu'il existe une classe algébrique $\beta \in H^4(Y,\Z_{\ell}(2))$, de restriction nulle aux fibres géométriques lisses de $\pi$, dont l'image dans $H^1(U, R^3\pi_*\Z_{\ell}(2))$ est égale à l'image de $\nu_{\alpha}$. Cela signifie que la classe $\alpha-\iota_*\beta$ dans $H^4(X_{\bar k},\Z_{\ell}(2))$ s'annule dans $H^4(Y_U, \Z_{\ell}(2))$.  Le théorème  est maintenant une conséquence  de la proposition ci-dessous -- qui apparaît dans \cite{Zu76} p.203 dans le cas complexe.
\end{proof}

\begin{prop}\label{nulfibres}
Soit $\alpha\in H^4(X_{\bar k}, \Z_{\ell}(2))$. Si la restriction de $\alpha$ à $H^4(Y_{U}, \Z_{\ell}(2))$ est nulle, alors $\alpha$ est nulle.
\end{prop}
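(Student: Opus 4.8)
The plan is to show that the composite
$$H^4(X_{\bar k},\Z_\ell(2)) \stackrel{\iota^*}{\to} H^4(Y,\Z_\ell(2)) \to H^4(Y_U,\Z_\ell(2))$$
is injective; the proposition is exactly the vanishing of its kernel. Since $\iota$ is the blow-up of $X_{\bar k}$ along the smooth surface $S$, the identity $\iota_*\iota^*=\mathrm{id}$ shows that $\iota^*$ is split injective, so everything takes place on $H^4(Y)$. The group $H^4(X_{\bar k},\Z_\ell)$ of a smooth cubic fourfold is torsion-free, as one sees by lifting $X$ to characteristic $0$ and applying smooth and proper base change together with the torsion-freeness of the cohomology of a complex cubic fourfold. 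As the source is torsion-free and $-\otimes_{\Z_\ell}\Q_\ell$ is exact, a kernel element would survive after inverting $\ell$; hence it suffices to prove injectivity of $H^4(X_{\bar k},\Q_\ell(2))\to H^4(Y_U,\Q_\ell(2))$. Finally, hard Lefschetz (Deligne) gives the decomposition $H^4(X_{\bar k},\Q_\ell(2)) = \Q_\ell\,h^2 \oplus H^4_{\mathrm{prim}}(X_{\bar k},\Q_\ell(2))$, where $h$ is the hyperplane class, since $H^2(X_{\bar k})=\Q_\ell\,h$ and $P^2=0$.

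Next I would analyse the target through the Leray spectral sequence $E_2^{pq}=H^p(U,R^q\pi_*\Q_\ell(2))\Rightarrow H^{p+q}(Y_U,\Q_\ell(2))$. As $U$ is a smooth affine curve over $\bar k$, Artin vanishing gives $E_2^{pq}=0$ for $p\ge 2$; using $R^1\pi_*=R^5\pi_*=0$ and $R^4\pi_*\Q_\ell(2)=\Q_\ell$ (generated fibrewise by $h_t^2$), the surviving terms in total degree $4$ produce a short exact sequence
$$0\to H^1(U,R^3\pi_*\Q_\ell(2))\to H^4(Y_U,\Q_\ell(2))\to H^0(U,R^4\pi_*\Q_\ell(2))=\Q_\ell\to 0.$$
The edge map to $H^0(U,R^4\pi_*\Q_\ell(2))$ is restriction to the smooth fibres, and $h^2$ is sent to $h_t^2\neq 0$, so the non-primitive part is detected there. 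A primitive class $\alpha$, by contrast, restricts to $0$ on every smooth fibre: writing $i_t:Y_t\hookrightarrow X_{\bar k}$ for the hyperplane section, one has $i_{t*}i_t^*\alpha=\alpha\cup h=0$, and $i_{t*}:H^4(Y_t,\Q_\ell(2))\to H^6(X_{\bar k},\Q_\ell(3))$ is an isomorphism, being Poincaré-dual to the weak-Lefschetz isomorphism $i_t^*:H^2(X_{\bar k})\to H^2(Y_t)$ (here $2<\dim Y_t$); hence $i_t^*\alpha=0$. Thus a kernel element must have trivial $h^2$-component, and the problem reduces to the injectivity of $H^4_{\mathrm{prim}}(X_{\bar k},\Q_\ell(2))\to H^1(U,R^3\pi_*\Q_\ell(2))$.

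The heart of the matter is this last injectivity, which is the $\ell$-adic counterpart of Zucker's input and is governed by the theory of Lefschetz pencils. Writing $j:U\hookrightarrow\mathbb{P}^1_{\bar k}$ and $\mathcal E=R^3\pi_*\Q_\ell(2)$ for the local system of vanishing cohomology (which has no constant part, by the running hypothesis $H^3(Y_{\bar k})=0$), the relevant facts from \cite[Exp.~XVII--XVIII]{SGA7} are that the vanishing cycles span each stalk of $\mathcal E$, that the geometric monodromy acts irreducibly, and that consequently the Leray spectral sequence for the proper map $\pi:Y\to\mathbb{P}^1_{\bar k}$ identifies the primitive cohomology with $H^1(\mathbb{P}^1_{\bar k},j_*\mathcal E)$. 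One then checks that restriction $H^1(\mathbb{P}^1_{\bar k},j_*\mathcal E)\to H^1(U,\mathcal E)$ is injective: in the truncation triangle $j_*\mathcal E\to Rj_*\mathcal E\to (R^1j_*\mathcal E)[-1]$ one has $H^0(\mathbb{P}^1_{\bar k},(R^1j_*\mathcal E)[-1])=H^{-1}(\mathbb{P}^1_{\bar k},R^1j_*\mathcal E)=0$, so $H^1(\mathbb{P}^1_{\bar k},j_*\mathcal E)\hookrightarrow H^1(\mathbb{P}^1_{\bar k},Rj_*\mathcal E)=H^1(U,\mathcal E)$. Composing, $H^4_{\mathrm{prim}}(X_{\bar k},\Q_\ell)\to H^1(U,\mathcal E)$ is injective, which finishes the argument.

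The hard part will be the Lefschetz-theoretic identification of $H^4_{\mathrm{prim}}(X_{\bar k})$ with $H^1(\mathbb{P}^1_{\bar k},j_*\mathcal E)$ in positive characteristic, that is, the irreducibility of the monodromy on the vanishing cohomology together with the degeneration of the Leray spectral sequence. These are the genuinely non-formal ingredients; in characteristic $0$ they are classical and constitute precisely Zucker's argument, and in general I would either invoke the corresponding statements of \cite{SGA7} or deduce them by taking the pencil to be the reduction of one in characteristic $0$ and transporting the conclusion along the smooth and proper specialization isomorphisms, the singular fibres being ordinary nodal cubic threefolds in both cases.
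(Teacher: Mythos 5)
Your argument is correct and rests on the same pillars as the paper's proof: torsion-freeness of $H^4(X_{\bar k},\Z_\ell(2))$ to reduce to $\Q_\ell$-coefficients, the Leray spectral sequence of the pencil, the Lefschetz-pencil results of \cite{SGA7}, and the injectivity of $H^1(\mathbb P^1_{\bar k}, j_*\mathcal E)\to H^1(U,\mathcal E)$ -- your truncation-triangle argument proves exactly the paper's claim that $H^1(\mathbb P^1_{\bar k}, R^3\pi_*\Q_\ell(2))\to H^1(U,R^3\pi_*\Q_\ell(2))$ is injective, since $R^3\pi_*\Q_\ell(2)=j_*\mathcal E$ by \cite[Exp. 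XVIII, 6.3]{SGA7}. The organization differs in two places. First, you split $H^4(X_{\bar k},\Q_\ell(2))=\Q_\ell\,h^2\oplus H^4_{\mathrm{prim}}$ at the source and kill the $h^2$-component by restriction to a single smooth fibre (using weak Lefschetz to see that $i_{t*}$ is injective); the paper instead works with the three-step Leray filtration on $H^4(Y,\Q_\ell(2))$ over all of $\mathbb P^1_{\bar k}$ and disposes of the leftover piece $F^2=H^2(\mathbb P^1_{\bar k},R^2\pi_*\Q_\ell(2))=\Q_\ell$, generated by the class of a hyperplane section of a fibre, by pairing against $[l\times\mathbb P^1_{\bar k}]$ in the exceptional divisor of the blow-up. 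Second, you run the spectral sequence over the affine curve $U$, where Artin vanishing gives degeneration for free, and only need the sequence over $\mathbb P^1_{\bar k}$ for the primitive part, whereas the paper runs it over $\mathbb P^1_{\bar k}$ throughout, citing \cite[Exp. XVIII, 5.6.8]{SGA7} or the decomposition theorem for degeneration. Both routes are sound; yours isolates more cleanly which inputs are formal.

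The one step you defer -- injectivity of $H^4_{\mathrm{prim}}(X_{\bar k},\Q_\ell(2))\to H^1(\mathbb P^1_{\bar k}, j_*\mathcal E)$ -- is precisely where the paper's remaining content sits, and it requires neither lifting to characteristic $0$ nor irreducibility of the monodromy. Once the Leray sequence over $\mathbb P^1_{\bar k}$ degenerates, the kernel of this map is $H^4_{\mathrm{prim}}\cap(\iota^*)^{-1}(F^2)$; and if $\iota^*\alpha_0=c\,[W]$ with $W$ a hyperplane section of a smooth fibre, then $\alpha_0=\iota_*\iota^*\alpha_0=c\,h^2$, which is primitive only if $c=0$. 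Spelling this out would close your proof; as written, the assertion that the Lefschetz theory identifies the primitive cohomology with $H^1(\mathbb P^1_{\bar k},j_*\mathcal E)$ is the only point genuinely left to the reader.
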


\begin{proof}
Le groupe $H^4(X_{\bar k}, \Z_{\ell}(2))$ est sans torsion, il suffit donc de travailler à coefficients rationnels. D'après \cite[Exposé XVIII, Théorème 5.6.8]{SGA7}, la suite spectrale
\begin{equation}\label{leraypinceau} E_2^{p,q} = H^p(\mathbb P^1_{\bar k}, R^q\pi_*\Q_{\ell}(2))\implies H^{p+q}(Y, \Q_{\ell}(2))\end{equation}
dégénère en $E_2$. C'est aussi une conséquence du théorème de décomposition de \cite{BBD82} : le faisceau $R\pi_*\Q_{\ell}$ se décompose dans la catégorie dérivée comme somme de ses $^pR^i\pi_*\Q_{\ell}[i]$, et, puisque l'on est au-dessus d'une courbe, faisceaux pervers et systèmes locaux se correspondent.

 La filtration sur $H^4(Y, \Q_{\ell}(2))$  induite par la suite spectrale $(\ref{leraypinceau})$ a pour termes non nuls $H^0(\mathbb P^1_{\bar k}, R^4\pi_*\Q_l(2))$, $H^1(\mathbb P^1_{\bar k}, R^3\pi_*\Q_{\ell}(2))$ et $H^2(\mathbb P^1_{\bar k}, R^2\pi_*\Q_{\ell}(2))$. Soit $j : U\hookrightarrow \mathbb P^1_{\bar k}$  l'inclusion de l'ouvert de lissité de  $\pi$. D'après \cite[Exposé XVIII, Théorème 6.3]{SGA7}, le  morphisme d'adjonction
$$R^i\pi_*\Q_{\ell}\ra j_*j^*R^i\pi_*\Q_{\ell}$$
est un isomorphisme. On a donc
$$H^0(\mathbb P^1_{\bar k}, R^4\pi_*\Q_{\ell}(2))=H^0(U,  R^4\pi_*\Q_{\ell}(2))$$
et
$$H^2(\mathbb P^1_{\bar k}, R^2\pi_*\Q_{\ell}(2))=H^2(\mathbb P^1_{\bar k}, \Q_{\ell}(1))=\Q_{\ell}.$$
De plus, la flèche
$$H^1(\mathbb P^1_{\bar k}, R^3\pi_*\Q_{\ell}(2))\to H^1(U, R^3\pi_*\Q_{\ell}(2))$$
 est injective.

Notons que l'inclusion
$$H^2(\mathbb P^1_{\bar k}, R^2\pi_*\Q_{\ell}(2))\hookrightarrow H^4(Y, \Q_{\ell}(2))$$
envoie $1$ sur la classe d'une section hyperplane d'une fibre lisse de $\pi$ dans $Y$.

Soit maintenant $\alpha$ un élément de $H^4(X_{\bar k}, \Q_{\ell}(2))$ dont  la restriction aux groupes $H^0(U,  R^4\pi_*\Q_{\ell}(2))$ et $H^1(U, R^3\pi_*\Q_{\ell}(2))$ est nulle. D'après ce qui précède,  son image  $\iota^*\alpha$ dans $H^4(Y, \Q_{\ell}(2))$ est  égale à l'image  d'un multiple d'une section hyperplane d'une fibre lisse de $\pi$.

Par construction, le morphisme $\iota:Y\to X_{\bar k}$ est l'éclatement du lieu de base $S$ du  pinceau. L'orthogonal dans $H^4(Y, \Q_{\ell}(2))$ de $\iota^*H^4(X_{\bar k},  \Q_{\ell}(2))$ est donc $H^2(S,  \Q_{\ell}(1))$, engendré par $[l\times\mathbb P^1_{\bar k}]$, où $l$ est une droite dans $S$. Cela implique que la classe $\iota^*\alpha$ est nulle, ainsi que la classe $\alpha$.
\end{proof}

\rem\label{uni4}{ De manière générale, on dispose d'une application rationnelle dominante $\mathbb P^4_{\bar k}\dashrightarrow X_{\bar k}$ de degré $2$. En effet, soient $l\subset X_{\bar k}$ une droite et $p : P \ra l$ le fibré projectif $\mathbb P(T_{X_{\bar k}})|_l\ra l$. La fibre générique de $p$ est une quadrique de dimension $3$ sur  $\bar k(t)$, une telle quadrique a un point rationnel. La variété $P$ est donc rationnelle. On a une application rationnelle dominante $P\dashrightarrow X_{\bar k}$ de degré $2$,  qui a une droite tangente à $X_{\bar k}$ en un point de $l$ associe son troisième point d'intersection avec $X_{\bar k}$.  Cette construction permet de montrer que le conoyau de l'application classe de cycle dans le théorème \ref{tate-entiere1} est de torsion $2$-primaire.}

\rem{La grande majorité de la démonstration permet d'aborder la surjectivité de l'application $CH^2(X)\otimes \Z_{\ell} \ra H^{4}(X_{\overline k}, \Z_{\ell}(r))^G$  dans le cas d'un corps de base fini, quitte à prendre une extension de degré premier à $\ell$.   Le seul endroit dans lequel il est nécessaire de travailler sur $\bar k$ est  le corollaire \ref{nf-existe}. L'obstruction à démontrer la surjectivité de (\ref{classecycleZraff}) est liée à la question de l'existence d'un cycle universel sur la jacobienne intermédiaire d'une cubique de dimension $3$ posée dans \cite[Question 1.6]{Vo13}.

\rem{Un certain nombre de nos arguments s'étendent aux fibrations en cubiques de dimension $3$ quelconques. C'est notamment le cas de la partie 2. Deux points seulement ne s'étendent pas et nous empêchent d'obtenir l'analogue des résultats de \cite{Vo13}.

Tout d'abord, on ne connaît pas la conjecture de Tate pour les cycles de codimension $2$ à coefficients dans $\Q_\ell$ pour une fibration en cubiques de dimension $3$. D'autre part, on ne peut pas en général se ramener au cas des classes dont la restriction aux fibres géométriques lisses de la fibration est nulle. Comme dans \cite{Vo13}, cela implique de travailler avec des espaces de modules de courbes sur les cubiques de dimension $3$ pour lesquels la fibre générique de l'application d'Abel-Jacobi est rationnellement connexe. Pour pouvoir appliquer le théorème de Graber, Harris et Starr et obtenir des sections de l'application d'Abel-Jacobi, il est alors nécessaire de passer à une extension finie du corps de base.

Ces deux obstructions étant prises en compte, il est vraisemblablement possible de vérifier que les résultats de \cite{Vo13} montrent l'annulation de la torsion du conoyau de l'application classe de cycle
$$CH^2(X_{\bar k})\otimes \Z_\ell \ra \bigcup_U H^4(X_{\bar k}, \Z_\ell(2))^U,$$
où $U$ parcourt les sous-groupes ouverts du groupe de Galois absolu de $k$, et où $X$ est une fibration en cubiques de dimension $3$ sur une courbe, dont les fibres singulières ont au plus des points doubles ordinaires.
}

\end{document}